\documentclass[a4paper,11pt,leqno]{amsart}

\usepackage{lmodern}
\usepackage[hidelinks]{hyperref}
\usepackage{amsmath,amssymb,bm}

\renewcommand{\phi}{\varphi}
\renewcommand{\epsilon}{\varepsilon}

\numberwithin{equation}{section}

\newtheorem{Theorem}{Theorem}[section]
\newtheorem{Lemma}[Theorem]{Lemma}
\newtheorem{Corollary}[Theorem]{Corollary}
\newtheorem{Fact}[Theorem]{Fact}
\theoremstyle{remark}
\newtheorem{Remark}[Theorem]{Remark}

\newcommand{\R}{\mathbb{R}}
\newcommand{\N}{\mathbb{N}}
\newcommand{\C}{\mathbb{C}}
\newcommand{\eps}{\varepsilon}
\newcommand{\op}{\operatorname}

\title{
A simple construction of smooth counterexamples to the Loewner and Carath\'eodory
conjectures}

\author{Naoya Ando}
\address[Naoya Ando]{Department of Mathematics, Faculty of Advanced Science and Technology,
Kumamoto University, 2-39-1 Kurokami, Chuo-ku, Kumamoto 860-8555, Japan}
\email{andonaoya@kumamoto-u.ac.jp}

\author{Masaaki Umehara}
\address[Masaaki Umehara]{Department of Mathematical and Computing Sciences,
Institute of Science Tokyo, 2-12-1-W8-34, O-okayama, Meguro-ku,
Tokyo 152-8552, Japan}
\email{umehara@comp.isct.ac.jp}

\subjclass[2020]{Primary 53A05, Secondary 53B30}
\keywords{umbilic, Hessian eigenline field, support function,
strictly convex surface}

\begin{document}

\begin{abstract}
Alp\"oge recently announced smooth counterexamples to the Loewner and
Carath\'eodory conjectures. We present a simpler construction based
on a trigonometric function as a seed. Using this, for every
positive integer~$k$, we construct a smooth real-valued function on
the plane, flat at the origin, whose Hessian eigenline fields have an
isolated singularity of index~$1+k/2$. A support-function parametrization
realizes this index  as an umbilic index of a non-real analytic
$C^\infty$-regular surface
germ in Euclidean $3$-space. The same framework, applied to a
rapidly decreasing function on the whole plane, gives a strictly
convex closed smooth surface with exactly one umbilic, of index~$2$.
This paper is self-contained with regard to the existence of
an umbilic of index $2$
and the construction of a counterexample to the Carath\'eodory conjecture.
\end{abstract}

\maketitle

\section*{Introduction}

An umbilic is a point of a regular surface in Euclidean three-space
$\R^3$ at which the two principal
curvatures coincide. Away from umbilics, the principal directions define
two orthogonal line fields. The index of an isolated umbilic is the
change of a continuous angle of either field along a small positively
oriented coordinate circle, divided by $2\pi$. It may be a half-integer,
since the angle of a line is defined modulo $\pi$.
The Loewner conjecture asserts that this index is at most~$1$.
The Carath\'eodory conjecture asserts that a closed convex regular  surface in
$\R^3$ has at least two umbilics. On a closed convex regular surface with isolated
umbilics, the sum of their indices is~$2$. Thus,
if the Loewner conjecture is true, then the
Carath\'eodory conjecture follows.

Alp\"oge \cite{Alpoge2026} recently announced counterexamples to both
conjectures in the $C^\infty$ category, developed with the aid of AI.
His local construction gives Hessian eigenline fields with an isolated
singularity of index~$1+k/2$ for every positive integer~$k$. He also
constructs a strictly convex closed regular surface with exactly one
umbilic, of index~$2$.

The purpose of the present paper is to give a simpler construction
through a variant of Alp\"oge's construction.
We use a trigonometric function as a seed
whose trace-free Hessian is easily seen to be nowhere zero.
For comparison, we first recall the seed and the functions used in
Alp\"oge's announced construction~\cite{Alpoge2026}.
We identify the real plane $\R^2$ with the complex plane $\C$ by
$z=x+iy$ ($x,y\in\R$).
The seed is the real-valued function
\[
 f_{\mathrm A}(z)
 :=-\frac14\cos(2x)+\frac3{10}\cos(2y)
  -\frac1{32}\cos(4y)+\sin x\sin y.
\]
For each positive integer~$k$, Alp\"oge's construction uses a
real-valued function $g^k_{\mathrm A}$ to obtain a regular surface with
an isolated umbilic of index~$1+k/2$ corresponding to $z=0$.
For $z\ne0$, this function is given by
\[
 \begin{gathered}
 g^k_{\mathrm A}(z)
 :=10^{10}+A(|z|)
 f_{\mathrm A}\!\left(\left(\frac{100}{\bar z}\right)^{k/2}\right),
 \\[1mm]
 A(r):=\frac{r^2}{1+r^2}
       \exp\!\left(-r^{-1/4}e^{-r^2}\right),
 \end{gathered}
\]
where $\bar z$ denotes the complex conjugate of~$z$ and $r>0$
is the radial variable.
The function extends smoothly to the origin with
$g^k_{\mathrm A}(0)=10^{10}$.
For $k=2$, Alp\"oge states that $g^2_{\mathrm A}$ extends smoothly
to $S^2$ and is the support function of a convex body whose boundary
has exactly one umbilic.

On the other hand, for $t\in\R$, we define $a(t)$ and our seed $F$ by
\begin{equation}\label{eq:F}
 F(z):=a(x)-a(y)+\sin x\sin y,
 \qquad a(t):=\sin^4t+3\sin^2t.
\end{equation}
One may alternatively take $a(t)=\sin^4t$, giving the simpler seed
\[
 \widetilde F(z):=\sin^4x-\sin^4y+\sin x\sin y.
\]
All the constructions below also apply to this alternative seed
(see Remark~\ref{Rmk:A}).
We use the choice of $a$ in \eqref{eq:F} because it gives the simpler
formula $a''(t)=6-16\sin^4t$, which makes it easier to verify that
the trace-free Hessian of $F$ is nowhere zero.

In place of the functions $g^k_{\mathrm A}$, we consider the family
\begin{equation}\label{eq:H}
 H_k(z):=
 \begin{cases}
  \exp\bigl(-|z|^{-k/4}\bigr)
       F\bigl(z^{-k/2}\bigr),&z\ne0,\\[1mm]
  0,&z=0,
 \end{cases}
 \qquad k=1,2,\ldots.
\end{equation}
Each $H_k$ is smooth and flat at the origin, which means that it
and all its partial derivatives vanish there.
Both $g^k_{\mathrm A}$ and $H_k$ are single-valued even when $k$ is odd.

Let $\mathcal E_{H_k}$ denote either eigenline field of the
Hessian of $H_k$.
In Section~\ref{sec:local-index}, we prove that the origin is an
isolated singularity of these fields and that the index at the origin,
denoted by $\operatorname{ind}_0$, is
\begin{equation}\label{eq:159}
 \operatorname{ind}_0(\mathcal E_{H_k})=1+\frac{k}{2}.
\end{equation}

We also give a Lorentz--Minkowski
analogue of the above fact using the space-like Ribaucour correspondence
of~\cite{AU2023}, which gives smooth space-like surface germs with
the same local umbilic indices.
Our approach is related to the authors' earlier construction with
Fujiyama~\cite{AFU} of examples that are $C^1$ at the origin and have
arbitrarily large indices.
Our Euclidean realizations use support functions.

For the global construction in Section~\ref{sec:one-umbilic}, we use
the same seed $F$ and define
\[
 f_\delta(W):=e^{-\delta(1+|W|^2)^{1/4}}F(W).
\]
For sufficiently small $\delta>0$, the trace-free Hessian of
$f_\delta$ is nowhere zero. Since $f_\delta$ and all its derivatives
decrease faster than every negative power of $|W|$, the support
function
\[
 \rho_\eps(W):=1+\eps\frac{f_\delta(W)}{1+|W|^2}
\]
extends smoothly across infinity and, for sufficiently small $\eps>0$, it
induces a strictly convex closed non-real analytic
$C^\infty$-regular surface with exactly one umbilic of index $2$.

We used ChatGPT (OpenAI, GPT-6.1) to assist in the search for candidate
functions for our constructions
and in revising the exposition. The authors have verified all
arguments presented in this paper and take full responsibility for its
mathematical content.

\section{Local index computation}\label{sec:local-index}

We denote by $\N$ the set of positive integers.
We compute the Hessian index of $H_k$ for $k\in \N$.
For a real function $h=h(x,y)$, we use the notation
\[
 \nabla h:=\begin{pmatrix}h_x\\h_y\end{pmatrix},\qquad
 \op{Hess}_{x,y}h:=\begin{pmatrix}h_{xx}&h_{xy}\\h_{xy}&h_{yy}\end{pmatrix}.
\]
These are the Euclidean gradient and Hessian, respectively.
A scalar matrix is a multiple of the identity matrix $I_2$ of size
$2\times2$. The trace
$$
\operatorname{tr}\op{Hess}_{x,y} h=h_{xx}+h_{yy}
$$ is the
sum of the diagonal entries. The {\it trace-free Hessian} is
$$
\op{Hess}_{x,y}h-\tfrac12(\operatorname{tr}\op{Hess}_{x,y}h)I_2,
$$
which has the same eigenline fields as $\op{Hess}_{x,y}h$ wherever this matrix
is not scalar.

Let $z=x+iy$ be the variable of $H_k$, and let $W:=X+iY$ be a
separate complex variable with real coordinates $X,Y$.
We write $F(W)=F(X,Y)$ for the same seed as in \eqref{eq:F}, and set
\[
 \Psi(W):=e^{-\sqrt{|W|}}F(W),\qquad W\ne0.
\]
For each $k\in \N$, we make the substitution
\[
 (W=)\,W(z)=z^{-k/2},\qquad z\ne0,
\]
choosing a local branch when $k$ is odd. Since
\[
 |W(z)|=|z|^{-k/2},
 \qquad \sqrt{|W(z)|}=|z|^{-k/4},
\]
we have
\[
 H_k(z)
 =e^{-|z|^{-k/4}}F(z^{-k/2})
 =\Psi(W(z)).
\]
For odd $k$, the two choices of branch differ by a sign.
Since $F(-W)=F(W)$, we also have $\Psi(-W)=\Psi(W)$,
so $H_k$ is a single-valued function.

As $z\to0$, we have $|W(z)|\to\infty$.
We therefore first estimate the derivatives of $\Psi$ as
$|W|\to\infty$ and then apply the chain rule to obtain estimates
for the derivatives of $H_k$ near the origin.
The case of index $2$ corresponds to $k=2$.

\subsection*{The seed and the index convention}
Throughout this paper, we use the notation
$$
\partial_z:=\tfrac12(\partial_x-i\partial_y),\qquad
\partial_{\overline{z}}:=\tfrac12(\partial_x+i\partial_y),
$$
where $\partial_x$ and
$\partial_y$ denote the real partial derivative operators.
For a real $C^2$ function $h$, consider the complex-valued
function given by
\begin{equation}\label{eq:local-q}
 q_h:=h_{xx}-h_{yy}+2ih_{xy}=4\overline{h_{zz}},
\end{equation}
where $h_{zz}:=\partial_z^2h$.
Thus, $q_h$ determines the trace-free Hessian of the function $h$,
and the Hessian is a scalar multiple of $I_2$
exactly where $q_h=0$.
At a point where $q_h\ne0$, write $q_h=|q_h|e^{i\phi}$, where
$\phi$ is a local continuous argument. The two Hessian eigenlines
make angles $\phi/2$ and $\phi/2+\pi/2$ with the positive $x$-axis,
with angles of lines understood modulo~$\pi$.
We denote either Hessian eigenline field by $\mathcal H_h$.
If the origin is an isolated zero of $q_h$, then
(cf. \cite[Appendix~B]{AFU})
\begin{equation}\label{eq:local-cartesian-index}
 \operatorname{ind}_0(\mathcal H_h)
 =\tfrac12\operatorname{wind}(q_h)
 =-\tfrac12\operatorname{wind}(h_{zz}),
\end{equation}
Here $\operatorname{wind}(g)$ is the winding number about zero of a
continuous nonvanishing complex-valued function $g$ on the circle
$z=re^{i\theta}$, where $r>0$ is sufficiently small and
$0\le\theta\le2\pi$. It is the total change of a continuous argument
of $g(re^{i\theta})$, divided by $2\pi$. Increasing $\theta$ gives
the counterclockwise orientation.
Recall the seed function (cf. \eqref{eq:F})
\begin{equation}\label{eq:local-seed}
 F(x,y)=\sin^4x-\sin^4y
        +3(\sin^2x-\sin^2y)+\sin x\sin y.
\end{equation}

\begin{Lemma}\label{lem:local-seed}
The function $F$ satisfies $F(-z)=F(z)$ for every $z\in\C$.
The function $F$ and each of its partial derivatives are bounded
on $\R^2$.
Moreover, there exists a constant $c>0$ such that
$|F_{zz}(z)|\ge c$ for every $z\in\C$.
In particular, $F_{zz}$ does not vanish anywhere on $\C$.
\end{Lemma}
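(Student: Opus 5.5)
The plan is a direct computation of $F_{zz}$ from \eqref{eq:local-seed}, followed by a periodicity-and-compactness argument. The first two assertions are immediate. Since $\sin$ is odd, $\sin^2$ and $\sin^4$ are even. Also $\sin(-x)\sin(-y)=\sin x\sin y$, so $F(-z)=F(z)$. The function $F$ is a trigonometric polynomial in $x$ and $y$, hence so is each of its partial derivatives. Each of these is a finite sum of products of sines and cosines with bounded coefficients, so it is bounded on $\R^2$.

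For the lower bound, I would compute $q_F=4\overline{F_{zz}}$ using \eqref{eq:local-q}, since $|F_{zz}|=\tfrac14|q_F|$. Write $F=a(x)-a(y)+\sin x\sin y$ with $a(t)=\sin^4t+3\sin^2t$. Then
\[
F_{xx}=a''(x)-\sin x\sin y,\qquad F_{yy}=-a''(y)-\sin x\sin y,\qquad F_{xy}=\cos x\cos y .
\]
The mixed terms cancel in $F_{xx}-F_{yy}$. Using $(\sin^2t)''=2-4\sin^2t$ and $(\sin^4t)''=12\sin^2t-16\sin^4t$, we get $a''(t)=6-16\sin^4t$. Hence
\[
q_F=12-16(\sin^4x+\sin^4y)+2i\cos x\cos y .
\]

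It then suffices to show that $q_F$ never vanishes. Suppose the real part vanishes, i.e. $\sin^4x+\sin^4y=3/4$. Then each of $\sin^4x$ and $\sin^4y$ is at most $3/4$. So $\sin^2x,\sin^2y\le\sqrt3/2$, and therefore $\cos^2x,\cos^2y\ge1-\sqrt3/2>0$. Thus the imaginary part $2\cos x\cos y$ is nonzero, and $q_F\ne0$ everywhere.

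Finally, $|q_F|$ is continuous and $2\pi$-periodic in $x$ and in $y$. It therefore attains its minimum on the compact square $[0,2\pi]^2$, and this minimum is positive. This gives the constant $c>0$ with $|F_{zz}|\ge c$ on all of $\C$. If one wants an explicit constant, one can split according to whether $|\operatorname{Re}q_F|\ge 1$ or not and bound $|\cos x\cos y|$ from below in the second case, but compactness suffices.

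The main obstacle is the nonvanishing step. Once the formula for $a''$ is in hand it reduces to the short argument above, and the rest is routine.
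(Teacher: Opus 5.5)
Your proof is correct and follows the paper's route. You use the same computation $a''(t)=6-16\sin^4t$ and $q_F=12-16(\sin^4x+\sin^4y)+2i\cos x\cos y$, then periodicity and compactness; the only inessential difference is the nonvanishing step, where the paper shows $\operatorname{Im}q_F=0$ forces $\sin^4x+\sin^4y\ge1$ and hence $\operatorname{Re}q_F\le-4$, while you show $\operatorname{Re}q_F=0$ forces $\cos^2x,\cos^2y\ge1-\sqrt3/2>0$ — both are valid.
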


\begin{proof}
Each term of $F$ is unchanged under $(x,y)\mapsto(-x,-y)$.
Also, $F$ is $C^\infty$-differentiable and $2\pi$-periodic in each variable, so it and
all its partial derivatives are bounded.
By \eqref{eq:F}, we have
$a(t)=\sin^4t+3\sin^2t$ and
\[
 a''(t)=6-16\sin^4t.
\]
Substitution into \eqref{eq:local-q} gives
\begin{equation}\label{eq:local-seed-q}
 q_F=12-16(\sin^4x+\sin^4y)+2i\cos x\cos y.
\end{equation}
By \eqref{eq:local-q}, $4F_{zz}=\overline{q_F}$.
If $\operatorname{Im}q_F=0$, then $\cos x\cos y=0$, hence
\begin{equation}\label{eq:313}
 \sin^4x+\sin^4y\ge1,
 \qquad \operatorname{Re}q_F\le-4<0.
\end{equation}
Consequently $q_F$ never vanishes. Periodicity and compactness give
\begin{equation}\label{eq:local-seed-complex-bounds}
 c:=\inf_{z\in\C}|F_{zz}(z)|>0.
\end{equation}
\end{proof}

\begin{Remark}\label{Rmk:A}
The choice $a(t)=\sin^4t$ in \eqref{eq:F} also gives a seed
$\widetilde F$ satisfying Lemma~\ref{lem:local-seed}.
Indeed, $\operatorname{Im}q_{\widetilde F}=2\cos x\cos y$.
If this vanishes, we may assume by symmetry that $\cos x=0$.
Setting $s=\sin^2y$, we obtain
\[
 \operatorname{Re}q_{\widetilde F}
 =-4+12s-16s^2
 =-16\left(s-\frac38\right)^2-\frac74<0.
\]
Thus $\widetilde F_{zz}$ is nowhere zero, and periodicity gives
$\inf_{\mathbb C}|\widetilde F_{zz}|>0$.
Both the local and global constructions below therefore apply
to this seed as well.
In this paper, we used $a(t)=\sin^4t+3\sin^2t$ because its second derivative
has the simpler expression $a''(t)=6-16\sin^4t$.
\end{Remark}

\subsection*{Derivative estimates for \texorpdfstring{$\Psi$}{Psi}}

We set $W:=X+iY$ as a complex variable. We write $F(W)$
for the same function $F$ as in Lemma~\ref{lem:local-seed}.
For $W\ne0$, set
\begin{equation}\label{eq:psi-radius-damping}
 R:=|W|,\qquad b(W):=e^{-\sqrt R}.
\end{equation}
Then
\begin{equation}\label{eq:psi-definition}
 \Psi(W)=b(W)F(W).
\end{equation}
We use these functions only in the region $R\ge1$.
We use the operators
\[
 \partial_W:=\tfrac12(\partial_X-i\partial_Y),\qquad
 \partial_{\overline W}:=\tfrac12(\partial_X+i\partial_Y).
\]

Our damping factor $b(W)$ plays two roles.
It decreases faster than every
negative power of $R$, which will give flatness after the substitution
$W=z^{-k/2}$. At the same time, its normalized derivatives $b_W/b$ and
$b_{WW}/b$ tend to zero as $R\to\infty$. Thus the terms produced by
differentiating $b$ are small relative to the nonvanishing seed term
$bF_{WW}$. The estimates below make this balance precise.

For the radial variable $R=|W|$, we have
\begin{equation}\label{eq:psi-radius-derivatives}
 R_W=\frac{R}{2W},
 \qquad
 (\sqrt R)_W=\frac{\sqrt R}{4W}.
\end{equation}
Differentiating $b$ therefore gives
\begin{equation}\label{eq:simple-damping}
 \frac{b_W}{b}=-\frac{\sqrt R}{4W},
 \qquad
 \frac{b_{WW}}{b}=\frac{R+3\sqrt R}{16W^2}.
\end{equation}
For a positive comparison function $v$, the notation $u=O(v)$
means that $|u|/v$ is bounded in the stated limit, and $u=o(v)$
means that this ratio tends to zero. Unless otherwise specified,
our bounds are uniform in the argument of $W$.
Since $|W|=R$,
\eqref{eq:simple-damping} gives
\begin{equation}\label{eq:psi-damping-asymptotics}
 \frac{b_W}{b}=O(R^{-1/2}),
 \qquad
 \frac{b_{WW}}{b}=O(R^{-1})
 \qquad(R\to\infty).
\end{equation}

The product rule gives
\begin{equation}\label{eq:psi-product-rule}
 \frac{\Psi_W}{b}
 =F_W+\frac{b_W}{b}F,\qquad
 \frac{\Psi_{WW}}{b}
 =F_{WW}+2\frac{b_W}{b}F_W+\frac{b_{WW}}{b}F.
\end{equation}
Since $F$ and $F_W$ are bounded, we obtain
\begin{equation}\label{eq:simple-seed-asymptotics}
 \frac{\Psi_W}{b}=O(1),
 \qquad
 \frac{\Psi_{WW}}{b}
 =F_{WW}+O(R^{-1/2})
 \qquad(R\to\infty).
\end{equation}
More explicitly, the uniform estimates in
\eqref{eq:simple-seed-asymptotics} give constants $C_1,C_2>0$
and $R_0\ge1$ such that
\begin{align}\label{eq:psi-second-derivative-error}
 &\left|
 \frac{\Psi_{WW}(W)}{b(W)}-F_{WW}(W)
 \right|
 \le C_1R^{-1/2}
\\
\label{eq:psi-first-derivative-bound}
 &\left|\frac{\Psi_W(W)}{b(W)}\right|\le C_2
\qquad \qquad(R=|W|\ge R_0).
\end{align}
By Lemma~\ref{lem:local-seed},
we have
\begin{equation}\label{eq:seed-global-lower-bound}
 |F_{WW}(W)|\ge c>0
 \qquad\text{for all }W\in\mathbb C,
\end{equation}
where $c$ is the same constant as in
\eqref{eq:local-seed-complex-bounds}.
Combining \eqref{eq:psi-second-derivative-error} and
\eqref{eq:seed-global-lower-bound}, we obtain, for
$R\ge\max\{R_0,(2C_1/c)^2\}$,
\begin{equation}\label{eq:psi-second-derivative-lower-bound}
 \begin{aligned}
 \left|\frac{\Psi_{WW}(W)}{b(W)}\right|
 &\ge |F_{WW}(W)|
     -\left|\frac{\Psi_{WW}(W)}{b(W)}-F_{WW}(W)\right|\\
 &\ge c-C_1R^{-1/2}
 \ge\frac{c}{2}>0,
 \end{aligned}
\end{equation}
which implies that $\Psi_{WW}(W)\ne0$
for all sufficiently large $|W|$.

\subsection*{Smoothness and index of the family}
For every positive integer $k$, consider the function $H_k$
defined on $\C(=\R^2)$ by
\begin{equation}\label{eq:local-family}
 H_k(z):=
 \begin{cases}
 e^{-|z|^{-k/4}}F(z^{-k/2}),&z\ne0,\\
 0,&z=0.
 \end{cases}
\end{equation}

\begin{Theorem}\label{thm:local-all-indices}
The function $H_k$ is single-valued, smooth, and flat at the origin.
Its Hessian has distinct eigenvalues on a sufficiently small punctured
disk, and
\begin{equation}\label{eq:local-all-index-result}
 \operatorname{ind}_0(\mathcal E_{H_k})=1+\frac{k}{2}.
\end{equation}
\end{Theorem}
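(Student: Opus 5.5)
Single-valuedness was already noted: the two branches of $z^{-k/2}$ differ by a sign and $F(-W)=F(W)$. The plan is to handle the remaining claims through the composition $H_k=\Psi\circ W$ with $W(z)=z^{-k/2}$. This map is holomorphic on any simply connected piece of the punctured plane (fix a local branch), and the quantities below depend only on $W^2$ or are sign-invariant.

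\emph{Smoothness and flatness.} Off the origin, $H_k$ is clearly smooth. Every partial derivative $\partial_z^m\partial_{\bar z}^n H_k$ is, by the chain rule, a finite sum of products of derivatives $\partial_W^a\partial_{\overline W}^{b}\Psi$ with derivatives of $W$ and $\overline W$, and each derivative of $W$ is $O(|z|^{-k/2-j})$. Differentiating $\Psi=bF$ gives
\[
|\partial_W^a\partial_{\overline W}^b\Psi|\le C_{a,b}\,e^{-\sqrt R}\,R^{N}
\]
for some $N$, because $F$ and its derivatives are bounded (Lemma~\ref{lem:local-seed}) and derivatives of $e^{-\sqrt R}$ are $e^{-\sqrt R}$ times polynomials in $R^{\pm1/2}$ for $R\ge1$. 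Hence every partial derivative of $H_k$ is $O(e^{-|z|^{-k/4}}|z|^{-M})$, which tends to $0$ as $z\to0$. A standard induction, using the mean value theorem on difference quotients at $0$, then shows that $H_k$ is $C^\infty$ with all derivatives zero at the origin.

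\emph{Hessian.} Since $W$ is holomorphic, $W_{\bar z}=0$, and therefore
\[
(H_k)_{zz}=\Psi_{WW}\,(W_z)^2+\Psi_W\,W_{zz}.
\]
Here $W_z=-\tfrac k2 z^{-k/2-1}$ and $W_{zz}=\tfrac k2(\tfrac k2+1)z^{-k/2-2}$, so $|W_{zz}|/|W_z|^2=O(|z|^{k/2})=O(R^{-1})$. Divide by $b(W_z)^2$. By \eqref{eq:psi-second-derivative-error} and \eqref{eq:psi-first-derivative-bound},
\[
\frac{(H_k)_{zz}}{b\,(W_z)^2}=F_{WW}(W)+O(R^{-1/2}),
\]
and \eqref{eq:seed-global-lower-bound} bounds $|F_{WW}|$ below by $c$. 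So for $|z|$ small, $(H_k)_{zz}\ne0$, and the Hessian has distinct eigenvalues on a punctured disk.

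\emph{Index.} By \eqref{eq:local-cartesian-index}, $\operatorname{ind}_0=-\tfrac12\operatorname{wind}((H_k)_{zz})$. On the circle $|z|=r$,
\[
\operatorname{wind}\bigl((H_k)_{zz}\bigr)=\operatorname{wind}\bigl((W_z)^2\bigr)+\operatorname{wind}\bigl(F_{WW}(W)+O(R^{-1/2})\bigr).
\]
The factor $(W_z)^2=\tfrac{k^2}4 z^{-k-2}$ is single-valued and has winding number $-(k+2)$. The remaining factor $G:=F_{WW}(W(z))+O(R^{-1/2})$ equals $(H_k)_{zz}/(W_z)^2$ divided by $b>0$, so it is single-valued and continuous on the circle.

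The main obstacle is showing that $\operatorname{wind}(G)=0$. The curve $W(re^{i\theta})$ winds around $\infty$ at radius $r^{-k/2}$ and sweeps many periods of $F$, so $F_{WW}$ is not close to a constant. A first attempt would be to use that $q_F$ avoids the nonnegative real axis: by \eqref{eq:313}, $\operatorname{Im}q_F=0$ forces $\operatorname{Re}q_F\le-4$. So the image of $F_{WW}=\overline{q_F}/4$ lies in $\C$ minus the closed ray $[0,\infty)$. By compactness and periodicity, it in fact stays a uniform distance from that ray. Because the error is $O(R^{-1/2})$, for small $r$ the values of $G$ also avoid a neighborhood of $[0,\infty)$. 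Thus $G$ takes values in a slit plane, where it has a continuous argument, and its winding number is $0$.

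This gives $\operatorname{wind}((H_k)_{zz})=-(k+2)$ and $\operatorname{ind}_0(\mathcal E_{H_k})=\tfrac{k+2}{2}=1+\tfrac k2$. For odd $k$, one checks that both factors $(W_z)^2$ and $F_{WW}(W)$ are sign-invariant under the branch change, so the factorization is global on the circle.
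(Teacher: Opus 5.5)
Your proof is correct, and up to the last step it follows the paper's argument. You use the same identity $(H_k)_{zz}=\Psi_{WW}(W_z)^2+\Psi_W W_{zz}$, the same normalization by $b\,(W_z)^2=\frac{k^2}{4}b\,z^{-k-2}$, and the same $O(R^{-1/2})$ error weighed against the lower bound $c$. Your quotient $G$ is exactly the paper's $F_{WW}(W)+\eta_k(W)$. Your smoothness and flatness argument is also more explicit than the paper's one-sentence justification.

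The difference lies in how you show $\operatorname{wind}(G)=0$. The paper first deforms $(H_k)_{zz}$ along line segments to the leading term $L_k=\frac{k^2}{4}b\,z^{-k-2}F_{WW}(W)$, using that the relative error is $<\frac12$. It then uses only that $F_{WW}$ is even and nowhere zero on the whole $W$-plane. Shrinking $W$ to $sW(re^{i\theta})$ with $s\in[0,1]$ gives a homotopy in $\C\setminus\{0\}$ from $F_{WW}(W(re^{i\theta}))$ to a constant loop. For odd $k$, evenness keeps each curve closed. So the fact that the curve sweeps many periods of $F$, which you flag as the main obstacle, is actually harmless.

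You instead bypass both the homotopy to $L_k$ and this topological observation. You use the sign information in \eqref{eq:313}: the values of $F_{WW}$ stay a uniform distance from the ray $[0,\infty)$, so $G$ lies in a slit plane. Your route is more hands-on. However, it depends on this particular feature of the seed, which the alternative seed $\widetilde F$ of Remark~\ref{Rmk:A} happens to share. The paper's route works for any even seed with bounded derivatives and $\inf|F_{WW}|>0$, even when the image of $F_{WW}$ surrounds the origin.
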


\begin{proof}

Fix a positive integer $k$.
Since $F$ is even, $H_k$ is independent of the branch of $z^{-k/2}$.
We first verify smoothness and flatness
of the function $H_k$. All derivatives of $F$ are bounded.
Since the exponential
factor makes the right-hand side tend to zero faster than every
power of $|z|$,
we can conclude that $H_k$ is smooth and flat there.

\medskip
We set
\[
 (W=)\,W(z)=z^{-k/2},
 \qquad H_k(z)=\Psi(W(z)).
\]
Since $W(z)$ is holomorphic, its complex conjugate
satisfies
\begin{equation}\label{eq:W0}
(\overline W)_{z}=0.
\end{equation}
Although $\Psi$ depends on both $W$ and $\overline W$, the chain
rule therefore gives
\[
 (H_k)_z
 =\Psi_W W_z+\Psi_{\overline W}\,(\overline W)_z
 =\Psi_W W_z.
\]
Applying \eqref{eq:W0} again when differentiating $\Psi_W$, we obtain
\[
 (H_k)_{zz}
 =\Psi_{WW}(W_z)^2+\Psi_W W_{zz}.
\]
Direct differentiation gives
\[
 W_z=-\frac{k}{2}z^{-k/2-1},
 \qquad
 W_{zz}=\frac{k}{2}\left(\frac{k}{2}+1\right)z^{-k/2-2},
\]
and hence
\[
 (W_z)^2=\frac{k^2}{4}z^{-k-2},
 \qquad
 \frac{W_{zz}}{(W_z)^2}
 =\left(1+\frac2k\right)\frac1W.
\]
Thus
\begin{equation}\label{eq:simple-chain-rule}
 (H_k)_{zz}
 =\frac{k^2}{4}z^{-k-2}
 \left(
 \Psi_{WW}(W)+\left(1+\frac2k\right)\frac{\Psi_W(W)}{W}
 \right).
\end{equation}
Set
\begin{equation}\label{eq:R}
R:=|W|.
\end{equation}
We use the same constants $C_1,C_2>0$ and $R_0\ge1$ as in
\eqref{eq:psi-second-derivative-error} and
\eqref{eq:psi-first-derivative-bound}.
Define the normalized error $\eta_k$ by
\begin{equation}\label{eq:local-eta-definition}
 \eta_k(W):=
 \left(
 \frac{\Psi_{WW}(W)}{b(W)}-F_{WW}(W)
 \right)
 +\left(1+\frac2k\right)\frac{\Psi_W(W)}{b(W)W}.
\end{equation}
Since $1+2/k\le3$ and $R^{-1}\le R^{-1/2}$ for $R\ge1$,
\[
 \begin{aligned}
 |\eta_k(W)|
 &\le
 \left|\frac{\Psi_{WW}(W)}{b(W)}-F_{WW}(W)\right|
 +\left(1+\frac2k\right)\frac1{|W|}
       \left|\frac{\Psi_W(W)}{b(W)}\right|\\
 &\le C_1R^{-1/2}+3C_2R^{-1}
\le C_\eta R^{-1/2}
 \qquad(R\ge R_0,\,\, C_\eta:=C_1+3C_2).
 \end{aligned}
\]
This constant is independent of the argument of $W$ and of $k$.
Factoring $b(W)$ out of \eqref{eq:simple-chain-rule}, we obtain
\begin{equation}\label{eq:simple-normalized-hessian}
 \begin{aligned}
 (H_k)_{zz}
 &=\frac{k^2}{4}b(W)z^{-k-2}
       \bigl(F_{WW}(W)+\eta_k(W)\bigr),\\
 |\eta_k(W)|
 &\le C_\eta R^{-1/2}
 \qquad(R\ge R_0).
 \end{aligned}
\end{equation}

We set
\begin{equation}\label{eq:local-general-leading}
 L_k(z):=\frac{k^2}{4}b(W)z^{-k-2}F_{WW}(W).
\end{equation}
Equations \eqref{eq:simple-normalized-hessian} and
\eqref{eq:local-general-leading} give the explicit decomposition
\begin{equation}\label{eq:local-leading-error-decomposition}
 (H_k)_{zz}
 =L_k(z)+\frac{k^2}{4}b(W)z^{-k-2}\eta_k(W).
\end{equation}
We remark that $L_k$ will play the role of the leading term.
We now estimate the second term of the right hand side of
\eqref{eq:local-leading-error-decomposition}
as a relative error term.
Since $L_k(z)\ne0$, we can divide the absolute value of the error
by $|L_k(z)|$. Cancelling the common factor and using
\eqref{eq:seed-global-lower-bound}
and
\eqref{eq:simple-normalized-hessian}, we obtain

\begin{equation}\label{eq:local-general-relative-error}
 \frac{|(H_k)_{zz}-L_k|}{|L_k|}
=\frac{|\eta_k(W)|}{|F_{WW}(W)|}
\le\frac{C_\eta}{c}R^{-1/2}
 =\frac{C_\eta}{c}|z|^{k/4}
 =O(|z|^{k/4}),
\end{equation}
By
\eqref{eq:R}, we have
$R=|W|=|z|^{-k/2}$, so $R^{-1/2}=|z|^{k/4}$.

\medskip
Choose a radius $\rho_k\in(0,1)$ sufficiently small that
\[
 |z|^{-k/2}\ge R_0,
 \qquad
 \frac{C_\eta}{c}|z|^{k/4}<\frac12
 \qquad(0<|z|<\rho_k).
\]
Then \eqref{eq:local-general-relative-error} implies
\[
 |(H_k)_{zz}(z)-L_k(z)|<\frac12|L_k(z)|
 \qquad(0<|z|<\rho_k).
\]
For $0<|z|<\rho_k$, define a continuous deformation from $L_k(z)$
to $(H_k)_{zz}(z)$ by the line segment
\begin{equation}\label{eq:local-comparison-homotopy}
 G_{k,t}(z)
 :=L_k(z)+t\bigl((H_k)_{zz}(z)-L_k(z)\bigr),
 \qquad 0\le t\le1.
\end{equation}
Then, we have $G_{k,0}=L_k$ and $G_{k,1}=(H_k)_{zz}$.
Both $L_k$ and $(H_k)_{zz}$ are single-valued and continuous.
Hence $G_{k,t}(z)$ is continuous in $(z,t)$.
We know that none of these segments
passes through zero, so we have that
\begin{align}\label{eq:local-comparison-nonvanishing}
|G_{k,t}(z)|
&\ge |L_k(z)|-t|(H_k)_{zz}(z)-L_k(z)|\\
&>\frac12|L_k(z)|>0 \qquad (0<|z|<\rho_k,\,\,0\le t\le 1).
\nonumber
\end{align}
We use \eqref{eq:local-comparison-nonvanishing} in two ways.
First, at $t=1$ we have $G_{k,1}=(H_k)_{zz}$, so
$(H_k)_{zz}\ne0$ throughout $0<|z|<\rho_k$.
By \eqref{eq:local-q}, the condition $(H_k)_{zz}\ne0$ is
equivalent to the Hessian matrix of $H_k(x,y)$ having two
distinct eigenvalues.
Since $\op{Hess}_{x,y}H_k(0)=0$, the origin is therefore an isolated
singularity of the Hessian eigenline fields.
Second, fix $0<r<\rho_k$ and compute winding numbers on the
counterclockwise circle $z=re^{i\theta}$, with
$0\le\theta\le2\pi$, using the convention stated after
\eqref{eq:local-cartesian-index}.
By \eqref{eq:local-comparison-homotopy} and
\eqref{eq:local-comparison-nonvanishing}, the curves
\[
 \theta\mapsto G_{k,t}(re^{i\theta}),\qquad 0\le t\le1,
\]
form a continuous family of closed curves that avoid zero.
At $t=0$ the curve is given by $L_k$, and at $t=1$ it is given
by $(H_k)_{zz}$.
Their winding numbers agree because winding number is unchanged
under a continuous deformation through curves that avoid zero.
Consequently,
\begin{equation}\label{eq:local-winding-comparison}
 \operatorname{wind}((H_k)_{zz})
 =\operatorname{wind}(L_k).
\end{equation}
By the definition of $L_k$ (cf. \eqref{eq:local-general-leading}), the
positive real factor $b(W)$ contributes no winding, and hence
\begin{equation}\label{eq:local-general-leading2}
\operatorname{wind}(L_k)
=-k-2+\operatorname{wind}\bigl(F_{WW}(W(z))\bigr).
\end{equation}
Since $F_{WW}$ is even and nowhere zero,
we can conclude that $\operatorname{wind}(L_k)=-k-2$.
Finally, \eqref{eq:local-cartesian-index} gives
\[
 \operatorname{ind}_0(\mathcal E_{H_k})
 =-\frac12\operatorname{wind}((H_k)_{zz})
 =\frac{k+2}{2}
 =1+\frac{k}{2}.
\]
\end{proof}

\subsection*{Realization by support functions}
We use support functions to realize Hessian eigenline fields as
principal line fields of regular surfaces in Euclidean three-space.
Let $g$ denote the round metric on the unit sphere $S^2\subset\R^3$.
For a smooth real-valued function $\rho$ on an open subset of $S^2$, set
\begin{equation}\label{eq:support-parametrization}
 \bm{X}_{\rho}(\bm{n}):=\rho(\bm{n})\bm{n}+\operatorname{grad}_{S^2}\rho(\bm{n}),\quad (\bm{n}\in S^2),
\end{equation}
where the spherical gradient is regarded as a tangent vector in
$T_{\bm{n}}S^2\subset\R^3$. Define the self-adjoint endomorphism
$\mathcal R_{\rho}$ by
\begin{equation}\label{eq:support-radius-operator}
 g(\mathcal R_{\rho} \bm{v},\bm{w})
 :=(\nabla_g^2\rho)(\bm{v},\bm{w})+\rho\,g(\bm{v},\bm{w}),
 \qquad (\bm{v},\bm{w}\in T_{\bm{n}}S^2).
\end{equation}
When the operator $\mathcal R_\rho$ 
is invertible, $\bm{X}_\rho$ is regular with unit normal $\bm{n}$.
Let $S$ denote the shape operator with respect to this normal,
defined as minus the differential of the Gauss map.
The pullback of $S$ by $\bm{X}_\rho$ is $-\mathcal R_\rho^{-1}$.
The principal line fields therefore pull back to the eigenline fields
of $\mathcal R_\rho$.

In the stereographic coordinate $W=X+iY$, the vector field
\[
 \bm{\nu}(W):=\frac{(2X,2Y,|W|^2-1)}{1+|W|^2}
\]
gives a unit normal vector field of $\bm{X}_{\rho}$.
It identifies $\C$ with $S^2\setminus\{\bm{N}\}$, where
$\bm{N}:=(0,0,1)$ corresponds to $W=\infty$.
In this coordinate, we set
\begin{equation}\label{eq:support-round-metric}
 D:=1+|W|^2,\quad
 g=\lambda^2(dX^2+dY^2),\quad \lambda:=\frac2D.
\end{equation}
We use the same notation for a function on the sphere and its
coordinate expression. In particular, the coordinate expression of
\eqref{eq:support-parametrization} is
\[
 \bm{X}_{\rho}(W)=\rho(W)\bm{\nu}(W)+d\bm{\nu}_W\bigl(\operatorname{grad}_g \rho(W)\bigr),
\]
where
$\operatorname{grad}_g\rho
=\lambda^{-2}(\rho_X\partial_X+\rho_Y\partial_Y)$
is the coordinate expression of the spherical gradient.

\begin{Lemma}\label{lem:support-umbilic-criterion}
Let $\rho$ be a smooth real-valued function on an open subset of
$S^2$, and use the stereographic coordinate $W=X+iY$ and the
notation $D=1+|W|^2$ as above. A regular point of
$\bm{X}_\rho$ is an umbilic if and only if
\begin{equation}\label{eq:support-umbilic-condition}
 \rho_{WW}+\frac{2\overline W}{D}\rho_W=0.
\end{equation}
\end{Lemma}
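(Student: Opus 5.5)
Since the pullback of the shape operator $S$ is $-\mathcal R_\rho^{-1}$, a regular point of $\bm X_\rho$ is an umbilic exactly when $\mathcal R_\rho$ is a scalar multiple of the identity. The plan is to write the trace-free part of $\mathcal R_\rho$ in the conformal coordinate $W$ and identify it with the left-hand side of \eqref{eq:support-umbilic-condition}.

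The term $\rho\, g$ in \eqref{eq:support-radius-operator} is already a multiple of $g$. So $\mathcal R_\rho$ is scalar if and only if the covariant Hessian $\nabla_g^2\rho$ is pure trace with respect to $g$. Because $g=\lambda^2(dX^2+dY^2)$ is conformal, this happens if and only if the $(2,0)$-part of $\nabla_g^2\rho$ vanishes. That part is the single complex component $(\nabla_g^2\rho)(\partial_W,\partial_W)$, extended complex-bilinearly.

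Next compute this component. Write $\lambda^2=e^{2u}$. The only nonzero Christoffel symbols of the complexified Levi-Civita connection are
\[
 \Gamma^W_{WW}=2u_W,\qquad \Gamma^{\overline W}_{\overline W\,\overline W}=2u_{\overline W},
\]
so the mixed terms drop out and
\[
 (\nabla_g^2\rho)(\partial_W,\partial_W)=\rho_{WW}-2u_W\rho_W .
\]
With $\lambda=2/D$ and $D=1+W\overline W$, we have $u=\log 2-\log D$, hence $u_W=-\overline W/D$. Substituting gives
\[
 (\nabla_g^2\rho)(\partial_W,\partial_W)=\rho_{WW}+\frac{2\overline W}{D}\rho_W .
\]
Therefore the umbilic condition at a regular point is exactly \eqref{eq:support-umbilic-condition}. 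This matches the Euclidean convention of \eqref{eq:local-q}, where $q_h$ vanishes precisely when $h_{zz}$ does.

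The main obstacle is getting the Christoffel term with the correct sign and factor. I would double-check it directly: compute $\nabla_{\partial_W}\partial_W=\Gamma^W_{WW}\partial_W$ from $g(\partial_W,\partial_{\overline W})=\lambda^2/2$, using
\[
 \Gamma^W_{WW}=\partial_W\log g_{W\overline W}=2u_W .
\]
I would also remark that regularity, i.e.\ invertibility of $\mathcal R_\rho$, is needed only so that $S=-\mathcal R_\rho^{-1}$ is defined. At such points, $S$ is scalar if and only if $\mathcal R_\rho$ is scalar.
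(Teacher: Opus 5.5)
Your proof is correct and follows essentially the same route as the paper. You reduce the umbilic condition to $\mathcal R_\rho$ being scalar, identify that with the vanishing of the $(\partial_W,\partial_W)$-component of $\nabla_g^2\rho+\rho g$ (where $\rho g$ drops out), and evaluate this component in the conformal coordinate. The only differences are minor: you derive the conformal Hessian formula explicitly via $\Gamma^W_{WW}=2u_W$, which the paper simply cites, while the paper spells out the equivalence ``$(2,0)$-part vanishes iff scalar'' via $T(\partial_W,\partial_W)=\tfrac{\lambda^2}{4}(a-c-2ib)$.
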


\begin{proof}
Since $g=\lambda^2(dX^2+dY^2)$ with $\lambda=2/D$, the
self-adjoint operator $\mathcal R_\rho$ has a real symmetric
coordinate matrix. Write
\[
 \mathcal R_\rho=\begin{pmatrix}a&b\\ b&c\end{pmatrix},
 \qquad T:=\nabla_g^2\rho+\rho g.
\]
By the defining identity \eqref{eq:support-radius-operator},
\[
 \begin{aligned}
 T(\partial_X,\partial_X)
 &=g(\mathcal R_\rho\partial_X,\partial_X)=\lambda^2a,\quad
 T(\partial_X,\partial_Y)
=g(\mathcal R_\rho\partial_X,\partial_Y)=\lambda^2b,\\
 T(\partial_Y,\partial_Y)
 &=g(\mathcal R_\rho\partial_Y,\partial_Y)=\lambda^2c.
 \end{aligned}
\]
Extend all covariant tensors complex-bilinearly and set
$\partial_W:=\frac12(\partial_X-i\partial_Y)$. Then
\[
 \begin{aligned}
 T(\partial_W,\partial_W)
 &=\frac14\Bigl(T(\partial_X,\partial_X)
       -T(\partial_Y,\partial_Y)-2iT(\partial_X,\partial_Y)\Bigr)\\
 &=\frac{\lambda^2}{4}(a-c-2ib).
 \end{aligned}
\]
On the other hand,
since
$g=\lambda^2(dX^2+dY^2)$, we have
$g(\partial_W,\partial_W)=0$. Thus, the formula
for the covariant Hessian in conformal coordinates yields
\[
 T(\partial_W,\partial_W)
=(\nabla_g^2\rho)(\partial_W,\partial_W)
=\rho_{WW}+\frac{2\overline W}{D}\rho_W.
\]
Since $\lambda>0$ and $a,b,c$ are real, the last expression vanishes
if and only if $a=c$ and $b=0$, that is, if and only if
$\mathcal R_\rho$ is scalar. At a regular point,
$\mathcal R_\rho$ is invertible, and the pulled-back shape operator
is $-\mathcal R_\rho^{-1}$. Thus $\mathcal R_\rho$ is scalar if and
only if the shape operator is scalar, which is precisely the
umbilic condition.
\end{proof}

The following fact
is the classical Bonnet correspondence (cf.
Guti\'errez and S\'anchez-Bringas
\cite[Proposition~2.1]{GS1997}).

\begin{Fact}\label{lem:support-hessian-correspondence}
Let $U$ be a smooth real-valued function on an open subset of the
$W$-plane, and let $\eps\ne0$. For
\begin{equation}\label{eq:support-transform}
 \rho:=1+\frac{\eps U}{D},
\end{equation}
we have
\begin{equation}\label{eq:support-complex-cancellation}
 \rho_{WW}+\frac{2\overline W}{D}\rho_W
 =\frac{\eps}{D}U_{WW}.
\end{equation}
Consequently, by Lemma~\ref{lem:support-umbilic-criterion},
a regular point of $\bm{X}_\rho$ is an umbilic if and only if
$U_{WW}=0$.
On any open subset where $I_2+(\eps/2)\op{Hess}_{X,Y}U$ is invertible,
the principal line fields of $\bm{X}_\rho$ at regular points,
expressed in the stereographic coordinate $W=X+iY$, are the Hessian
eigenline fields of $U$. Their isolated singularities have the same
indices.
\end{Fact}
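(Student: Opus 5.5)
The plan is a direct computation in the stereographic coordinate, followed by a comparison of trace-free parts.

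\emph{Step 1: the identity \eqref{eq:support-complex-cancellation}.} Since $D=1+W\overline W$, we have $D_W=\overline W$, $D_{WW}=0$ and $(\overline W)_W=0$. Differentiating $\rho=1+\eps U/D$ gives
\[
 \rho_W=\eps\Bigl(\frac{U_W}{D}-\frac{\overline W U}{D^2}\Bigr),\qquad
 \rho_{WW}=\eps\Bigl(\frac{U_{WW}}{D}-\frac{2\overline W U_W}{D^2}
 +\frac{2\overline W^2U}{D^3}\Bigr).
\]
Adding $\frac{2\overline W}{D}\rho_W=\eps\bigl(\frac{2\overline WU_W}{D^2}-\frac{2\overline W^2U}{D^3}\bigr)$, the last two terms cancel and only $\eps U_{WW}/D$ remains. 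Since $\eps\ne0$ and $D>0$, Lemma~\ref{lem:support-umbilic-criterion} then shows that a regular point is an umbilic if and only if $U_{WW}=0$.

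\emph{Step 2: eigenlines.} In the proof of Lemma~\ref{lem:support-umbilic-criterion} we obtained $T(\partial_W,\partial_W)=\frac{\lambda^2}{4}(a-c-2ib)$, where $\mathcal R_\rho=\bigl(\begin{smallmatrix}a&b\\b&c\end{smallmatrix}\bigr)$. Combining this with Step~1 gives
\[
 q_{\mathcal R}:=a-c+2ib=\frac{4}{\lambda^2}\,\overline{\frac{\eps}{D}U_{WW}}
 =\eps D\,\overline{U_{WW}}=\frac{\eps D}{4}\,q_U ,
\]
with $q_U$ as in \eqref{eq:local-q}. Thus the trace-free part of the coordinate matrix of $\mathcal R_\rho$ is the real multiple $\eps D/4$ of the trace-free Hessian of $U$.

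The eigenlines of a real symmetric $2\times2$ matrix are determined by the argument of its $q$: they make angles $\phi/2$ and $\phi/2+\pi/2$ with the $X$-axis, where $\phi=\arg q$. For $\eps>0$, $\arg q_{\mathcal R}=\arg q_U$. For $\eps<0$, $\arg q_{\mathcal R}=\arg q_U+\pi$, which only interchanges the two line fields. In either case the unordered pair of eigenline fields of $\mathcal R_\rho$ coincides with that of $\op{Hess}_{X,Y}U$. At regular points the principal line fields are the eigenline fields of $\mathcal R_\rho$, since the pulled-back shape operator is $-\mathcal R_\rho^{-1}$. This proves the claim about principal line fields.

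\emph{Step 3: indices.} Around an isolated zero, the angle of a line field equals $\frac12\arg q$ plus a constant, which is either $0$ or $\pi/2$. The factor $\eps D/4$ is a nonvanishing real function, so it contributes no winding. Hence $\operatorname{wind}(q_{\mathcal R})=\operatorname{wind}(q_U)$ on small circles, and \eqref{eq:local-cartesian-index} shows that the indices agree.

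\emph{Main obstacle.} The expected difficulty is relating the hypothesis ``$I_2+(\eps/2)\op{Hess}_{X,Y}U$ invertible'' to regularity, that is, to invertibility of $\mathcal R_\rho$. I would first compute the trace part of $\mathcal R_\rho$ via $\lambda^{-2}\Delta_0\rho+2\rho$, using the conformal covariance of $\Delta_{S^2}+2$ on functions of the form $U/D$, and aim to show that the coordinate matrix of $\mathcal R_\rho$ is a positive multiple of $I_2+(\eps/2)\op{Hess}_{X,Y}U$. If that exact form fails, the argument above still goes through at regular points, which is all the index statement uses.
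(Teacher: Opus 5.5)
Your Step~1 is exactly the paper's computation, and it is all the paper proves here. The authors note that only \eqref{eq:support-complex-cancellation} is used later, and they defer the eigenline and index assertions to Guti\'errez--S\'anchez-Bringas \cite[Proposition~2.1]{GS1997}.

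Your Steps~2--3 are a correct, self-contained substitute for that citation. Conjugating $T(\partial_W,\partial_W)=\frac{\lambda^2}{4}(a-c-2ib)$ from the proof of Lemma~\ref{lem:support-umbilic-criterion} and inserting the identity gives $a-c+2ib=\frac{\eps D}{4}q_U$. So the trace-free parts of $\mathcal R_\rho$ and $\op{Hess}_{X,Y}U$ differ by a nonvanishing real factor, the sign of $\eps$ only swaps the two fields, and the winding numbers agree. Compared with the citation, your argument stays within the paper's own notation and uses only regularity of $\bm X_\rho$; it never uses the hypothesis on $I_2+(\eps/2)\op{Hess}_{X,Y}U$. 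The citation buys brevity and places the result in its classical setting.

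Do not pursue the route sketched under ``Main obstacle''; it fails. Combining your trace-free identity with $\operatorname{tr}\mathcal R_\rho=\frac{D^2}{4}\Delta_0\rho+2\rho$ gives
$\mathcal R_\rho=\bigl(1+\frac{\eps}{2}(U-XU_X-YU_Y)\bigr)I_2+\frac{\eps D}{4}\op{Hess}_{X,Y}U$.
This is generally not proportional to $I_2+(\eps/2)\op{Hess}_{X,Y}U$. For example, $U=X^2$ gives $\mathcal R_\rho=\mathrm{diag}\bigl(1+\frac{\eps}{2}(1+Y^2),\,1-\frac{\eps}{2}X^2\bigr)$. For $\eps>0$ this is singular on $X^2=2/\eps$, while $I_2+(\eps/2)\op{Hess}_{X,Y}U=\mathrm{diag}(1+\eps,1)$ is invertible everywhere. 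So that hypothesis does not encode regularity. Your fallback of working only at regular points is the correct way to finish, and it is all the conclusion of the statement refers to.
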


\begin{proof}[Proof of \eqref{eq:support-complex-cancellation}]
The next section uses only the first part of this fact, so we prove
only that part here.
We first verify \eqref{eq:support-complex-cancellation} directly.
Since $D_W=\overline W$ and $\partial_W\overline W=0$,
\[
 \begin{aligned}
 \rho_W
 &=\eps\left(\frac{U_W}{D}
       -\frac{\overline W\,U}{D^2}\right),\\
 \rho_{WW}
 &=\eps\left(\frac{U_{WW}}{D}
       -\frac{2\overline W\,U_W}{D^2}
       +\frac{2\overline W^{\,2}U}{D^3}\right),\\
 \frac{2\overline W}{D}\rho_W
 &=\eps\left(\frac{2\overline W\,U_W}{D^2}
       -\frac{2\overline W^{\,2}U}{D^3}\right).
 \end{aligned}
\]
Adding the last two expressions proves
\eqref{eq:support-complex-cancellation}.
\end{proof}

\begin{Corollary}
\label{cor:local-support-indices}
For every positive integer~$k$,
there exists a $C^\infty$-regular surface germ in $\R^3$
with an isolated umbilic of index $1+k/2$. In particular, the
indices of isolated umbilics of $C^\infty$-regular surface germs in $\R^3$
are unbounded above.
\end{Corollary}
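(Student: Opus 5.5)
We combine Theorem~\ref{thm:local-all-indices} with Fact~\ref{lem:support-hessian-correspondence}. Fix $k\in\N$ and take $U:=H_k$, regarded as a function of the stereographic coordinate $W$ on a neighborhood of $W=0$; here $W=0$ corresponds to the south pole of $S^2$. For a small $\eps>0$ we set
\[
 \rho:=1+\frac{\eps H_k}{D},\qquad D=1+|W|^2 ,
\]
on a small disk $|W|<r_0$. The surface germ will be $\bm{X}_\rho$ on this disk.

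The first step is to check regularity and that the hypotheses of the Fact hold near $W=0$. Since $H_k$ is flat at the origin, its Hessian tends to zero there. Hence, after shrinking $r_0$ if necessary, we may assume $\|\op{Hess}_{X,Y}H_k\|<1/|\eps|$ on the disk, so that $I_2+(\eps/2)\op{Hess}_{X,Y}H_k$ is invertible. Regularity amounts to invertibility of $\mathcal R_\rho$. At $W=0$ all derivatives of $\rho-1$ vanish, so $\mathcal R_\rho=\mathrm{id}$ there; by continuity $\mathcal R_\rho$ stays invertible on a small disk. At $W=0$ the germ osculates the unit sphere; it is umbilic there, but this point will turn out to be isolated.

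The second step is to locate the umbilics. Under these hypotheses, \eqref{eq:support-complex-cancellation} together with Lemma~\ref{lem:support-umbilic-criterion} shows that a regular point is umbilic exactly when $(H_k)_{WW}=0$. By Theorem~\ref{thm:local-all-indices}, $(H_k)_{zz}\ne0$ on a punctured disk, so $W=0$ is an isolated umbilic.

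The third step is the index comparison, which is the main point. Fact~\ref{lem:support-hessian-correspondence} states that the principal line fields, written in the $W$-coordinate, coincide with the Hessian eigenline fields of $H_k$. Since only the first part of the Fact is proved in the paper, I would verify this identification directly. By \eqref{eq:support-complex-cancellation}, the trace-free part of $\mathcal R_\rho$ in the conformal frame is proportional to $T(\partial_W,\partial_W)=(\eps/D)(H_k)_{WW}$, with positive factor $\eps/D$ when $\eps>0$. Moreover, the eigenlines of $\mathcal R_\rho$ are determined by the argument of the quantity $a-c-2ib$, which is $4T(\partial_W,\partial_W)/\lambda^2$. This quantity is a positive multiple of $(H_k)_{WW}$.

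Consequently the eigenlines of $\mathcal R_\rho$ agree with those of $\op{Hess}H_k$, because $q_{H_k}=4\overline{(H_k)_{WW}}$. The pulled-back shape operator $-\mathcal R_\rho^{-1}$ has the same eigenlines as $\mathcal R_\rho$. Applying \eqref{eq:local-cartesian-index} and \eqref{eq:local-all-index-result}, the umbilic index equals $-\tfrac12\operatorname{wind}((H_k)_{WW})=1+k/2$.

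One orientation check remains: the principal line fields are indexed on the surface, while the computation above uses the $W$-chart. The map $W\mapsto\bm{X}_\rho(W)$ is a local diffeomorphism, and the index is invariant under the choice of orientation-compatible chart. Since the index of a line field does not change when both orientations are reversed, it suffices to note that $\bm{X}_\rho$ is a diffeomorphism onto its image. This gives the umbilic index $1+k/2$. Letting $k$ range over $\N$ shows that the indices are unbounded above.
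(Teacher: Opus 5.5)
Your proposal is correct and follows the paper's proof. It uses the same support function $\rho=1+\eps H_k/D$, uses flatness of $H_k$ to get $\mathcal R_\rho=I_2$ at the origin and hence regularity nearby, and combines Theorem~\ref{thm:local-all-indices} with Fact~\ref{lem:support-hessian-correspondence} for the isolated umbilic and its index. Your one addition is valid and makes the index identification self-contained: you read off the principal lines directly from $a-c-2ib=4\lambda^{-2}T(\partial_W,\partial_W)=4\lambda^{-2}(\eps/D)(H_k)_{WW}$ (taken from the proof of Lemma~\ref{lem:support-umbilic-criterion}), rather than invoking the second half of the Fact, which the paper does not prove.
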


\begin{proof}
Fix $\eps>0$ and consider the support function
\[
 \rho(z)=1+\eps\frac{H_k(z)}{1+|z|^2}.
\]
Since $H_k$ is flat at the origin, the matrices
$\mathcal R_\rho$ and $I_2+(\eps/2)\operatorname{Hess}_{x,y}H_k$ both equal
$I_2$ there. By continuity, both are invertible on a
sufficiently small neighborhood of the origin.
Thus $\bm{X}_\rho$ is regular there.
Theorem~\ref{thm:local-all-indices} and
Fact~\ref{lem:support-hessian-correspondence}
give the required isolated umbilic and its index.
\end{proof}

The space-like Ribaucour correspondence of Ando--Umehara
\cite[Theorem~3.3]{AU2023} gives the same local conclusion in
Lorentz--Minkowski three-space $\mathbb L^3$, that is, $\R^3$
with the metric $dX^2+dY^2-dZ^2$ in real coordinates $(X,Y,Z)$.
A regular surface is space-like when the induced metric is positive definite.
We therefore obtain the following.

\begin{Corollary}
\label{cor:local-ribaucour-indicesL}
For every positive integer~$k$,
there exists a $C^\infty$-regular space-like surface germ in
$\mathbb L^3$ with an isolated umbilic of index $1+k/2$. In particular, the indices of isolated umbilics
of  surface germs in~$\mathbb L^3$ are unbounded above.
\end{Corollary}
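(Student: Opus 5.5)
The plan is to transport the Euclidean germ of Corollary~\ref{cor:local-support-indices}, or more directly the Hessian data of $H_k$ from Theorem~\ref{thm:local-all-indices}, to $\mathbb L^3$ using the space-like Ribaucour correspondence \cite[Theorem~3.3]{AU2023}. As we use it, that theorem says the following. To a smooth function $U$ on a planar domain one associates a space-like surface in $\mathbb L^3$, given by a Lorentzian support-function (or Ribaucour-type) parametrization. It is regular wherever a matrix of the form $I_2+(\eps/2)\op{Hess}U$ (up to sign convention) is invertible. Its principal line fields, in the planar coordinate, coincide with the Hessian eigenline fields of $U$, and its umbilics are exactly the zeros of $U_{zz}$. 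So the proof reduces to choosing $U$ and checking the hypotheses of that theorem near the origin.

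First, take $U:=\eps H_k$ with $\eps>0$ small. Then apply the Ribaucour correspondence on the hyperbolic-plane side, replacing the sphere. We use the stereographic model of hyperbolic space on the unit disk, with conformal factor $2/(1-|z|^2)$, so the relevant support function is $\rho=1+\eps H_k/(1-|z|^2)$ or its analogue as in \cite{AU2023}. The analogue of the computation in Fact~\ref{lem:support-hessian-correspondence}, with $D$ replaced by $1-|z|^2$ and a sign change, should give
\[
\rho_{zz}-\frac{2\bar z}{1-|z|^2}\rho_z=\frac{\eps}{1-|z|^2}(H_k)_{zz}.
\]
I would either verify this directly by the same three-line differentiation or simply cite it from \cite{AU2023}.

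Second, regularity and space-likeness near $0$. Since $H_k$ is flat at the origin, the relevant operator equals $I_2$ at $z=0$, hence is invertible nearby by continuity. The surface is therefore a regular space-like germ, because its unit normal is the time-like vector given by the hyperbolic Gauss map.

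Third, the index. By the correspondence the principal line fields are the Hessian eigenline fields $\mathcal E_{H_k}$. Theorem~\ref{thm:local-all-indices} says $(H_k)_{zz}\ne0$ on a punctured disk, so the umbilic at the origin is isolated, and its index is $1+k/2$. Letting $k$ range over $\N$ gives unboundedness.

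The main obstacle is making sure the cited correspondence really preserves the line fields as unoriented lines with the same orientation of the coordinate circle, and not, say, up to a reflection that would negate the index. I would check this through the cancellation identity above: the trace-free part of the Lorentzian radius-of-curvature operator is a positive multiple of $(H_k)_{zz}$, so the windings agree.
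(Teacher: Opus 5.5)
Your argument is correct, but it takes a different route from the paper. The paper proves this corollary by a single citation: the space-like Ribaucour correspondence \cite[Theorem~3.3]{AU2023} is used as a black box to carry the Euclidean germs of Corollary~\ref{cor:local-support-indices} over to space-like germs in $\mathbb L^3$ with the same umbilic indices. You instead build the Lorentzian germ directly from $H_k$ by a support function over the hyperbolic plane, which is a Lorentzian analogue of Fact~\ref{lem:support-hessian-correspondence}. Your description of \cite[Theorem~3.3]{AU2023} is unverified, and your argument does not need it, so either check it or drop the attribution.

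To make your route complete, write the parametrization explicitly. For $\bm n\in H^2$ ($\langle\bm n,\bm n\rangle=-1$) and $h:=\langle\bm X,\bm n\rangle$, one has $\bm X=-h\bm n+\operatorname{grad}_{H^2}h$. On $T_{\bm n}H^2=\bm n^\perp$ this gives $d\bm X=(\nabla^2h)^\sharp-h\,\mathrm{Id}$ and $d\bm n=\mathrm{Id}$. Hence the shape operator is $-\bigl((\nabla^2h)^\sharp-h\,\mathrm{Id}\bigr)^{-1}$, and the surface is space-like wherever $d\bm X$ is invertible. In the Poincar\'e disk, $(\nabla^2h)(\partial_z,\partial_z)=h_{zz}-\frac{2\bar z}{1-|z|^2}h_z$, and your cancellation identity, which is correct, applies with $h=-\rho$.

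The operator that must be invertible is $\rho\,\mathrm{Id}-(\nabla^2\rho)^\sharp$, not $I_2+(\eps/2)\op{Hess}U$. Flatness still makes it $I_2$ at the origin, which is all you use. With this convention the trace-free part of $d\bm X$ is a \emph{negative} real multiple of that of $\op{Hess}_{x,y}H_k$, so the two eigenline fields are interchanged. This is harmless, since both fields have index $\frac12\operatorname{wind}(q_{H_k})$. What matters, and what your identity does establish, is that no complex conjugation (reflection) enters.

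Your route is self-contained and makes the preservation of the index transparent. The paper's citation is shorter and transfers every Euclidean local example to $\mathbb L^3$ at once.
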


\section{Support functions and the global construction}
\label{sec:one-umbilic}

We apply the support-function correspondence of
Fact~\ref{lem:support-hessian-correspondence} to a rapidly decreasing
function on $\C$ constructed from the same seed $F$.
This gives a strictly convex closed surface with exactly
one umbilic.

\subsection*{A rapidly decreasing function on the plane}
We now prepare the function $U$ for the global construction.
For the seed $F$ of Lemma~\ref{lem:local-seed}, we set
\begin{equation}\label{eq:support-seed-bounds}
 M_0:=\sup_{W\in\C}|F(W)|,\qquad
 M_1:=\sup_{W\in\C}|F_W(W)|.
\end{equation}
Both $M_0$ and $M_1$ are finite.
For $\delta>0$, define
\begin{equation}\label{eq:support-global-damping}
 \Phi(W):=(1+|W|^2)^{1/4},\,\,
 b_\delta(W):=e^{-\delta\Phi(W)},\,\,
 f_\delta(W):=b_\delta(W)F(W).
\end{equation}
Choose and fix $\delta>0$ such that
\begin{equation}\label{eq:support-delta-choice}
 \mathcal E_\delta
 :=\frac{\delta M_1}{2}
  +\frac{(\delta^2+3\delta)M_0}{16}<\frac c2,
\end{equation}
where $c$ is the constant given in
\eqref{eq:local-seed-complex-bounds}.
This is possible since $\mathcal E_\delta\to0$ as $\delta\to0$.

\begin{Lemma}\label{lem:rib-damping}
The function $f_\delta$ is $C^\infty$-differentiable on~$\C$ and satisfies
\begin{equation}\label{eq:rib-damped-hessian}
 |(f_\delta)_{WW}(W)|
 \ge\frac{c}{2}b_\delta(W)>0
 \qquad(W\in\C).
\end{equation}
Moreover, for every multi-index $\mu=(\mu_1,\mu_2)$ with
nonnegative integer entries and every integer $m\ge0$,
\begin{equation}\label{eq:rib-rapid-decay}
 \sup_{W\in\C}(1+|W|)^m|\partial^\mu f_\delta(W)|<\infty,
 \qquad \partial^\mu:=\partial_X^{\mu_1}\partial_Y^{\mu_2}.
\end{equation}
In particular, $f_\delta$ and all its derivatives decrease faster
than every negative power of~$|W|$.
\end{Lemma}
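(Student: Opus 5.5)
Smoothness is immediate. $\Phi(W)=(1+|W|^2)^{1/4}$ is a smooth function of the smooth positive function $1+|W|^2$. Hence $b_\delta=e^{-\delta\Phi}$ is smooth on all of $\C$, with no singularity at $W=0$, unlike $b$. It follows that $f_\delta=b_\delta F$ is smooth.

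For \eqref{eq:rib-damped-hessian} I will mimic \eqref{eq:psi-product-rule}, but with explicit global constants instead of asymptotic ones. Writing $D=1+|W|^2$, we have $D_W=\overline W$, so
\[
 \Phi_W=\frac{\overline W}{4D^{3/4}},\qquad
 \Phi_{WW}=-\frac{3\overline W^{\,2}}{16D^{7/4}},
\]
and therefore
\[
 \frac{(b_\delta)_W}{b_\delta}=-\delta\Phi_W,\qquad
 \frac{(b_\delta)_{WW}}{b_\delta}=\delta^2\Phi_W^2-\delta\Phi_{WW}
 =\frac{\delta^2\overline W^{\,2}}{16D^{3/2}}+\frac{3\delta\overline W^{\,2}}{16D^{7/4}}.
\]
Since $|W|^2\le D$ and $D\ge1$, this gives $|\Phi_W|\le\frac14 D^{-1/4}\le\frac14$, and the second ratio is bounded by $(\delta^2+3\delta)/16$. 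The product rule then yields
\[
 \frac{(f_\delta)_{WW}}{b_\delta}=F_{WW}+2\frac{(b_\delta)_W}{b_\delta}F_W+\frac{(b_\delta)_{WW}}{b_\delta}F ,
\]
where the last two terms are bounded in modulus by $2\cdot\frac{\delta}{4}M_1+\frac{\delta^2+3\delta}{16}M_0=\mathcal E_\delta<c/2$. Combined with $|F_{WW}|\ge c$ from Lemma~\ref{lem:local-seed}, the reverse triangle inequality gives $|(f_\delta)_{WW}|\ge (c-\mathcal E_\delta)b_\delta\ge \frac c2 b_\delta$, which is \eqref{eq:rib-damped-hessian}. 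The only point needing care is that these bounds hold for all $W$, including small $|W|$. This is exactly why $\Phi$ uses $1+|W|^2$ rather than $|W|$.

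For \eqref{eq:rib-rapid-decay}, apply the Leibniz rule: $\partial^\mu f_\delta=\sum_{\nu\le\mu}\binom{\mu}{\nu}\partial^{\nu}b_\delta\,\partial^{\mu-\nu}F$, and every derivative of $F$ is bounded. So it suffices to show $\partial^\nu b_\delta=b_\delta\,P_\nu$ with $|P_\nu|\le C_\nu$ on $\C$. I will prove this by induction on $|\nu|$. Each $\partial^\alpha\Phi$ with $|\alpha|\ge1$ is bounded on $\C$. To see this, write $\Phi=D^{1/4}$ with $D$ a polynomial of degree $2$. Each derivative is a finite sum of terms $D^{1/4-j}\cdot(\text{polynomial of degree}\le 2j-|\alpha|)$, and these are $O(D^{1/4-|\alpha|/2})$, hence bounded for $|\alpha|\ge1$. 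Then $\partial(b_\delta P)=b_\delta(-\delta P\,\partial\Phi+\partial P)$, where $P$ is a polynomial in derivatives of $\Phi$ and so stays bounded; this is Faà di Bruno's formula for $e^{-\delta\Phi}$.

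Finally, $b_\delta\le e^{-\delta\sqrt{|W|}}$, because $\Phi\ge|W|^{1/2}$. Since $(1+|W|)^m e^{-\delta\sqrt{|W|}}$ is bounded for each $m$, every term in the Leibniz expansion is bounded after multiplication by $(1+|W|)^m$, which gives \eqref{eq:rib-rapid-decay}. The main effort is bookkeeping for the uniform derivative bounds on $\Phi$. I expect no real obstacle there, since the structure $D^{1/4-j}\times$polynomial handles it directly.
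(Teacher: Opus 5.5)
Your proposal is correct and follows essentially the same route as the paper: the same formulas for $\Phi_W,\Phi_{WW}$, the same product-rule identity for $(f_\delta)_{WW}/b_\delta$ with error bounded by $\mathcal E_\delta$, and the reverse triangle inequality against $|F_{WW}|\ge c$. For the rapid decay, you supply bookkeeping that the paper compresses into a single sentence about the exponential factor: uniformly bounded derivatives of $\Phi$ of order $\ge1$, Fa\`a di Bruno, and the bound $b_\delta\le e^{-\delta\sqrt{|W|}}$.
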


\begin{proof}
Differentiating $\Phi$  with respect to $W$ gives
\begin{equation}\label{eq:rib-Phi-derivatives}
 \Phi_W=\frac{\overline W}{4(1+|W|^2)^{3/4}},\qquad
 \Phi_{WW}=-\frac{3\overline W^{\,2}}
                    {16(1+|W|^2)^{7/4}}.
\end{equation}
Since $|W|^2\le1+|W|^2$, these formulas imply the uniform estimates
\[
 |\Phi_W|\le\frac14,\qquad |\Phi_{WW}|\le\frac3{16}.
\]
We also have
\[
 \frac{(b_\delta)_W}{b_\delta}=-\delta\Phi_W,\qquad
 \frac{(b_\delta)_{WW}}{b_\delta}
 =\delta^2\Phi_W^2-\delta\Phi_{WW}.
\]
So we have
\begin{equation}\label{eq:rib-damping-product-rule}
 \frac{(f_\delta)_{WW}}{b_\delta}
 =F_{WW}-2\delta\Phi_WF_W
   +(\delta^2\Phi_W^2-\delta\Phi_{WW})F.
\end{equation}
All functions in this formula are evaluated at the same point~$W$.
Using \eqref{eq:support-seed-bounds}, we obtain
\begin{equation}\label{eq:rib-relative-error}
 \left|\frac{(f_\delta)_{WW}}{b_\delta}-F_{WW}\right|
 \le\frac{\delta M_1}{2}
      +\frac{(\delta^2+3\delta)M_0}{16}
 =\mathcal E_\delta<\frac c2.
\end{equation}

As in \eqref{eq:psi-second-derivative-lower-bound}, the triangle
inequality gives

$$
\left|\frac{(f_\delta)_{WW}}{b_\delta}\right|
\ge |F_{WW}|-
|\frac{(f_\delta)_{WW}}{b_\delta}-F_{WW}|\ge c-\mathcal E_\delta
>\frac{c}2.
$$

Finally,
since the exponential factor decays faster than any negative power of $|W|$
as $|W|\to\infty$,
we obtain \eqref{eq:rib-rapid-decay}.
\end{proof}

\subsection*{The closed surface and its unique umbilic}
Using the identification $\hat\C=\C\cup\{\infty\}\simeq S^2$
from \eqref{eq:support-round-metric}, define
\begin{equation}\label{eq:support-global-function}
 \rho_\eps(W):=1+\eps\frac{f_\delta(W)}{1+|W|^2}
 \quad(W\in\C),\qquad \rho_\eps(\infty):=1.
\end{equation}

\begin{Theorem}\label{thm:support-one-umbilic}\label{prop:global-one-umbilic}
Fix $\delta>0$ satisfying \eqref{eq:support-delta-choice}.
For all sufficiently small $\eps>0$, the function $\rho_\eps$ is smooth
on $S^2$, and its support-function parametrization $\bm{X}_{\rho_\eps}$ is a
$C^\infty$-embedding onto a strictly convex closed regular surface.
This surface has exactly one umbilic,  and its index is $2$.
\end{Theorem}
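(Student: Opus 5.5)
The proof has four steps: smoothness of $\rho_\eps$ on $S^2$, regularity and strict convexity for small $\eps$, embeddedness, and the umbilic count with its index.

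\textbf{Smoothness on $S^2$.} On the chart $\C$, the function $\rho_\eps$ is smooth by Lemma~\ref{lem:rib-damping}. Near the north pole we use the coordinate $\zeta=1/W$, so that
\[
\rho_\eps-1=\eps\,|\zeta|^2 f_\delta(1/\zeta)/(1+|\zeta|^2),
\]
extended by $0$ at $\zeta=0$. Each $\zeta$-derivative of $f_\delta(1/\zeta)$ is a finite sum of terms of the form (derivative of $f_\delta$ at $W$) times (a power of $|W|$ with bounded degree). By \eqref{eq:rib-rapid-decay}, all these terms tend to $0$ faster than any power of $|\zeta|$. Hence $\rho_\eps-1$ is smooth and flat at $\zeta=0$, and $\rho_\eps$ is smooth on $S^2$.

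\textbf{Regularity and strict convexity.} Write $\rho_\eps=1+\eps\sigma$, where $\sigma:=f_\delta/D$ extends to a smooth function on the compact sphere $S^2$. Then $\mathcal R_{\rho_\eps}=\mathrm{Id}+\eps\mathcal R'$, where $g(\mathcal R'\bm v,\bm w)=\nabla_g^2\sigma(\bm v,\bm w)+\sigma g(\bm v,\bm w)$ and $\mathcal R'$ is bounded in operator norm uniformly on $S^2$. For $\eps$ small, $\mathcal R_{\rho_\eps}$ is therefore positive definite everywhere. Consequently $\bm X_{\rho_\eps}$ is an immersion with unit normal $\bm n$, and the shape operator $-\mathcal R^{-1}_{\rho_\eps}$ is definite, so the Gauss curvature is positive everywhere.

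\textbf{Embedding.} I would argue via support functions. The map $\bm X_{\rho}$ has Gauss map $\bm n\mapsto\bm n$, a diffeomorphism $S^2\to S^2$. A closed immersed surface whose Gauss map is a diffeomorphism and whose curvature is positive is the boundary of a strictly convex body (Hadamard's theorem), and in particular it is embedded. Alternatively, one can show directly that $\rho_\eps$ extended $1$-homogeneously to $\R^3$ is convex, because its Hessian restricted to the sphere is $\mathcal R_{\rho_\eps}>0$. Then $\rho_\eps$ is the support function of a convex body $K$, and $\bm X_{\rho_\eps}(\bm n)$ is the unique point of $\partial K$ with outer normal $\bm n$, which gives injectivity.

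\textbf{Umbilics and index.} By Fact~\ref{lem:support-hessian-correspondence}, a point of the chart $\C$ is umbilic if and only if $(f_\delta)_{WW}=0$. By \eqref{eq:rib-damped-hessian} this never happens, so there is no umbilic on $\C$. At $W=\infty$, we have $\rho_\eps-1$ flat, so $\mathcal R_{\rho_\eps}=\mathrm{Id}$ there and the point is umbilic. It is therefore the unique umbilic, and it is isolated. For its index, the surface is a closed convex surface with one isolated umbilic, so the Poincar\'e--Hopf theorem for line fields gives index $\chi(S^2)=2$.

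As a cross-check that avoids Poincar\'e--Hopf, one can work in the chart $\zeta$. The principal line fields correspond (Fact~\ref{lem:support-hessian-correspondence}, after the Möbius symmetry of the construction) to Hessian eigenlines of $\tilde U(\zeta)=|\zeta|^2 f_\delta(1/\zeta)$. By the chain-rule argument of Theorem~\ref{thm:local-all-indices} with $k=2$, the leading term of $\tilde U_{\zeta\zeta}$ is a positive factor times $\zeta^{-4}F_{WW}$. Since $F_{WW}$ is even and nonvanishing, it contributes zero winding, so the winding of $\tilde U_{\zeta\zeta}$ is $-4$ and the index is $2$, consistent with the topological count.

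The main obstacle is the embedding step: passing from the immersion to a closed embedded strictly convex surface. I would handle it via the homogeneous extension and convex-body characterization of support functions, citing the standard fact that a smooth $1$-homogeneous function with positive spherical $\nabla^2\rho+\rho g$ is the support function of a strictly convex body with smooth boundary.
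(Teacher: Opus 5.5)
Your proposal is correct and follows the paper's proof essentially step for step: smoothness at $\bm{N}$ via $\zeta=1/W$ and rapid decay, positive definiteness of $\mathcal R_{\rho_\eps}$ for small $\eps$, no finite umbilics by Lemma~\ref{lem:rib-damping} and Fact~\ref{lem:support-hessian-correspondence}, flatness of $\rho_\eps-1$ at $\bm{N}$, and Poincar\'e--Hopf for the index. The differences are minor: you get embeddedness from Hadamard's theorem (or from the convex-body characterization of support functions), whereas the paper uses $C^\infty$-closeness of $\bm{X}_{\rho_\eps}$ to the embedded unit sphere; and your optional chart check at $\bm{N}$ is valid, since in fact $\tilde U_{\zeta\zeta}=|\zeta|^2\zeta^{-4}(f_\delta)_{WW}(1/\zeta)$ exactly, which has winding $-4$ because $(f_\delta)_{WW}$ is nowhere zero on $\C$.
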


\begin{proof}
First we verify smoothness at infinity. We set
$z:=1/W$. Then
\[
 \rho_\eps(1/z)-1
 =\eps\frac{|z|^2}{1+|z|^2}f_\delta(1/z).
\]
By \eqref{eq:rib-rapid-decay}, every derivative
of $f_\delta(1/z)$ tends to zero faster than any power of $|z|$.
Thus $f_\delta(1/z)$ extends by zero to a smooth function flat at
$z=0$.
Consequently \eqref{eq:support-global-function} implies that
$\rho_\epsilon$ is a smooth
function on $S^2$.

Since $\rho_\eps$ is a smooth perturbation of the constant function $1$
on $S^2$, the matrix $\mathcal R_{\rho_\eps}$ tends to $I_2$ uniformly and
$\bm{X}_{\rho_\eps}\to \bm{X}_1$, with all derivatives, as $\eps\to0$.
Since $\bm{X}_1(\bm{n})=\bm{n}$ gives the unit sphere,
for sufficiently small $\eps$,
the map $\bm{X}_{\rho_\eps}$ remains an embedding
and the operator $\mathcal R_{\rho_\eps}$ is positive definite. Its shape operator is
$-\mathcal R_{\rho_\eps}^{-1}$, so, with the inward unit normal,
both principal curvatures are positive. Hadamard's theorem gives
strict convexity.
At every finite point, Fact~\ref{lem:support-hessian-correspondence}
and Lemma~\ref{lem:rib-damping}
give
\[
 (\rho_\eps)_{WW}+\frac{2\overline W}{1+|W|^2}(\rho_\eps)_W
 =\frac{\eps}{1+|W|^2}(f_\delta)_{WW}\ne0.
\]
By Lemma~\ref{lem:support-umbilic-criterion}, there are
therefore no finite umbilics. At $\bm{N}(=(0,0,1))$, the function
$\rho_\eps$ has the same derivatives of every order as the constant
support function $1$. Thus $\bm{N}$ is a unique umbilic.
Finally, the Poincar\'e--Hopf theorem for
principal line fields on the sphere gives its index as
$\chi(S^2)=2$.
\end{proof}


\begin{thebibliography}{99}

\bibitem{AFU}
N.~Ando, T.~Fujiyama and M.~Umehara,
\newblock $C^1$-umbilics with arbitrarily high indices,
\newblock \emph{Pacific J. Math.} \textbf{288} (2017), no.~1, 1--26.

\bibitem{AU2023}
N.~Ando and M.~Umehara,
\newblock Umbilics of surfaces in the Lorentz--Minkowski $3$-space,
\newblock \emph{Results Math.} \textbf{78} (2023), Paper No.~241.
\newblock doi:~\href{https://doi.org/10.1007/s00025-023-02013-2}{10.1007/s00025-023-02013-2}.

\bibitem{Alpoge2026}
L.~Alp\"oge,
\newblock Announcement of counterexamples to the Loewner and
Carath\'eodory conjectures in the smooth category,
\newblock X post, 19 August 2026.
\newblock \url{https://x.com/__alpoge__/status/2089971359921156203}.

\bibitem{GS1997}
C.~Guti\'errez and F.~S\'anchez-Bringas,
\newblock Planar vector field versions of Carath\'eodory's and
Loewner's conjectures,
\newblock \emph{Publ. Mat.} \textbf{41} (1997), no.~1, 169--179.

\end{thebibliography}
\end{document}